\newtheorem{theorem}{Theorem}
\newtheorem{corollary}{Corollary}
\newtheorem{lemma}{Lemma}
\newenvironment{proof}
{\noindent{\bf Proof\/}.}{{ $\Box$}\smallskip\par}
\title{On the $S_n$-equivariant Euler characteristic of moduli spaces of hyperelliptic curves.}
\author{E. Gorsky\footnote{Partially supported by the grants RFBR-007-00593, RFBR-08-01-00110-a, NSh-709.2008.1 and the Moebius Contest fellowship for young scientists.}}
\begin{document}
\maketitle

\begin{abstract}
The generating function for $S_n$-equivariant Euler characteristics of moduli spaces of pointed hyperelliptic curves
for any genus $g\ge 2$ is calculated. This answer generalizes the known ones for genera  2 and 3 and answers obtained by J. Bergstrom for any genus and $n\le 7$ points.  
\end{abstract}

\section{Introduction}

Consider the moduli space $\mathcal{H}_{g,n}$ of hyperelliptic curves of genus $g$ with $n$ marked points. One has a natural action of the symmetric group $S_n$ on this space, so its homologies are representations of $S_n$. Let $V_{\lambda}$ be the irreducible representations of $S_n$, $s_{\lambda}$
be the corresponding Schur polynomials and $H^{i}(\mathcal{H}_{g,n})=\sum_{\lambda}a_{i,\lambda}V_{\lambda}$ for some integers $a_{i,\lambda}$. Define the $S_n$-equivariant Euler characteristic of $\mathcal{H}_{g,n}$ by the formula
$$\chi^{S_n}(\mathcal{H}_{g,n})=\sum_{i,\lambda}(-1)^{i}a_{i,\lambda}s_{\lambda}.$$

Let $p_n$ denote the $n$th elementary Newton polynomial in the infinite number of variables. Then  $\chi^{S_n}(\mathcal{H}_{g,n})$  can be also calculated by the formula

$$\chi^{S_n}(\mathcal{H}_{g,n})=\sum_{i}(-1)^{i}\sum_{\sigma\in S_{n}}(-1)^{|\sigma|}p_1^{k_1(\sigma)}\cdot\ldots\cdot p_{n}^{k_n(\sigma)}\cdot \mbox{\rm Tr}(\sigma|_{H^{i}(\mathcal{H}_{g,n})}),$$
where $k_i(\sigma)$ denotes the number of cycles of length $i$ in the permutation $\sigma$.

This text provides an explicit answer for a generating function for $S_n$-equivariant Euler characteristics of moduli spaces $\mathcal{H}_{g,n}$ of hyperelliptic curves of arbitrary genus $g\ge 2$ and $n$ marked points.

This problem was studied by several authors. The answer for $g=0$ is well-known, the answer for $g=1$ was obtained by E. Getzler (\cite{getzler},\cite{getzler1}).  After that it was developed by O. Tommasi and J. Bergstrom in \cite{bergstrom},\cite{bertom},\cite{tommasi}.
For example, O. Tommasi proved that for any genus homologies of the moduli space $\mathcal{H}_{g}$ (without marked points) are trivial, so the Euler characteristic of the corresponding coarse moduli space is equal to 1. Using point counts over finite fields, J. Bergstrom discovered a sequence of recurrence relations between the coefficients of the corresponding characters, which permits him to compute the $S_n$-equivariant Euler characteristics of $\mathcal{H}_{g,n}$ for $n\le 7$ and any $g$.
These answers are quite complicated, for example, for $n=4$ the answer non-trivially depends of the residue of the genus modulo 12. For genus 2 the answer for $n$ small enough was obtained in \cite{getzler2},\cite{getzler3}, and for every $n$ in \cite{my}.
For genus 3 a bunch of answers (at least for $n\le 30$) was obtained by Bini and van den Geer (\cite{bini}). G. Bini also calculated in \cite{bini2} all non-equivariant Euler characteristics of $\mathcal{H}_{g,n}$.

The approach of the current paper extends the one used in (\cite{my}). It is based on consideration of the forgetful map $\mathcal{H}_{g,n}\rightarrow\mathcal{H}_{g}$. Its fiber over a curve $C$ is isomorphic
to $F(C,n)/Aut(C)$, where by $F(X,n)$ from now on we'll denote the configuration space of ordered $n$-tuples of distinct points of a space $X$. E. Getzler in (\cite{getzler}) obtained a formula for the generating function for the $S_n$-equivariant  Hodge-Deligne polynomial of $F(X,n)$ for any $X$. After a slight modification of Getzler's formula the following formula was obtained in (\cite{my}).

Let a finite group $G$ acts on a quasiprojective variety  $X$, and for $g\in G$ we denote by $X_k(g)$ a subset of $X$ consisting of points with $g$-orbits of length $k$. For example, $X_1(g)$ is a set of $g$-fixed points. Then the following equation holds:

\begin{equation}
\label{eqF}
\sum_{n=0}^{\infty}t^{n}\chi^{S_n}(F(X,n)/G)={1\over |G|}\sum_{g\in G}\prod_{k=1}^{\infty}(1+p_{k}t^k)^{\chi(X_k(g))\over k}.
\end{equation}
For example, to get a generating function for non-equivariant Euler characteristic one has to set $p_1=1$ and $p_i=0$ for $i>1$, so
\begin{equation}
\label{neqF}
\sum_{n=0}^{\infty}{t^{n}\over n!}\chi(F(X,n)/G)={1\over |G|}\sum_{g\in G}(1+t)^{\chi(X_1(g))}.
\end{equation}
The last equation can be checked independently, since the generating function for the Euler characteristics of $g$-fixed points on $F(X,n)$ equals to $(1+t)^{\chi(X_1(g))}$.

Now we decompose $\mathcal{H}_g$ on strata $\Xi_G$ consisting of curves with automorphism group $G$. If needed, one should also decompose these strata in such manner that corresponding group actions are isomorphic. Now we obtain a formula for the equivariant:
\begin{equation}
\label{eqH}
\sum_{n=0}^{\infty}t^{n}\chi^{S_n}(\mathcal{H}_{g,n})=\sum_{G}{\chi(\Xi_G)\over |G|}\sum_{g\in G}\prod_{k=1}^{\infty}(1+p_{k}t^k)^{\chi(X_k(g))\over k}
\end{equation}  
and non-equivariant:
\begin{equation}
\label{neqH}
\sum_{n=0}^{\infty}{t^{n}\over n!}\chi(\mathcal{H}_{g,n})=\sum_{G}{\chi(\Xi_G)\over |G|}\sum_{g\in G}(1+t)^{\chi(X_1(g))}
\end{equation}
Euler characteristics of moduli spaces of hyperelliptic curves.

Therefore a priori to obtain an answer one should decompose $\mathcal{H}_g$ on strata corresponding to all possible automorphism group actions, calculate Euler characteristics of the corresponding strata and $X_k(g)$ for all $k$ and $g$. In (\cite{my}) this program was realized for genus 2. For higher genus this program is theoretically doable, because hyperelliptic curves with non-trivial symmetry groups corresponds to symmetric configurations of ramification points on $\mathbb{CP}^1$, but the number of possible  symmetric configurations increases dramatically with genus. Also the structure of these groups becomes very sophisticated, for example, all symmetry groups of regular polyhedra (e.g. of icosahedron) will appear.

In this article we propose a refinement of this approach, namely, we change the order of summation in (\ref{eqH}).
All automorphism groups of different hyperellipic curves contain in a certain extension of the group $PGL(2,\mathbb{C})$. Therefore we can define some natural classes of such automorphisms.
We choose these classes $A$ such that $\chi(X_k(g))$ for all $k$ are constant on $A$ (so we can denote it as $\chi_{k}(A)$), 
and (\ref{eqH}) can be rewritten in a following form:  
 $$\sum_{n=0}^{\infty}t^{n}\chi^{S_n}(\mathcal{H}_{g,n})=\sum_{A}(\sum_{G}{\chi(\Xi_G)\over |G|}\cdot |\{h\in G|h\in A \}|)\cdot \prod_{k=1}^{\infty}(1+p_{k}t^k)^{\chi_k(A)\over k}.$$
The idea is that the sum in parentheses corresponding to every class $A$ has some unexpected nice properties.
For example, for $A=\{e\}$ we get (since every group has unique unit element) the orbifold Euler characteristic of $\mathcal{H}_{g}$. We prove that for all other $A$ the corresponding coefficients are  orbifold Euler characteristics of some configuration spaces. This gives an easy and natural way for computation of these coefficients in our case, what leads to the final answer.

\bigskip

The paper is organized in a following way. In the section 2 we discuss the answer for the non-equivariant Euler characteristic (Theorem 1), which is obtained independently and coincides with the results of \cite{bini2}, in the section 3 we define in a slightly more general setting the coefficients mentioned above and prove some of their properties. Finally, in the section 4 we compute these coefficients for the moduli spaces of hyperelliptic curves and apply them to obtain a final answer for the equivariant case (Theorem 2). In the appendices we check the coincidence of our answer with the one obtained by J. Bergstrom  in (\cite{bergstrom}) for $n\le 4$ marked points
and the formula of G. Bini (\cite{bini2}) for the non-equivariant Euler characteristics.
The coincidence with the results of Bini and van den Geer (\cite{bini}) for genus 3 is also obtained
up to 30 points, but this check is not included in this text. 

\bigskip

The author is grateful to J. Bergstrom for his interest to this work, discussions and providing huge and very informative tables of equivariant Euler characteristics for small number of points. Without the attention of prof. Bergstrom this activity would not be started. The author would like to thank also M. Kazarian and S. Lando for lots of useful discussions.

\bigskip

\section{Non-equivariant answer}

From the discussion in the previous section we get the formula (\ref{neqH}), so the non-equivariant answer has a form 
$$\sum_{n=0}^{\infty}{t^{n}\over n!}\chi(\mathcal{H}_{g,n})=\sum_{k}c_k(1+t)^k,$$
where $c_k$ are some unknown coefficients, and $k$ runs over the set of  Euler characteristics of fixed point sets of all possible automorphisms of a hyperelliptic curve of genus $g$. Such an automorphism can be identical ($k=2-2g$), a hyperelliptic involution ($k=2+2g$), or its restriction onto the underlying $\mathbb{CP}^{1}$ is nontrivial, i.e. has 2 fixed points, and so $k$ can be equal to 0, 1, 2, 3 or 4.
 An important remark is that $c_k=c_{4-k}$, since if an automorphism has $k$ fixed points, then its composition with the involution has $4-k$ fixed points. Another remark is that $c_{2-2g}=c_{2+2g}$ is equal to the orbifold Euler characteristic of $\mathcal{H}_g$, which equals to ${-1\over 2\cdot 2g(2g+1)(2g+2)}$ (e.g \cite{bini2} or \cite{my}).
 
Therefore  we get the following equation:
$$\sum_{n=0}^{\infty}{t^{n}\over n!}\chi(\mathcal{H}_{g,n})={-1\over 2\cdot 2g(2g+1)(2g+2)}[(1+t)^{2-2g}+(1+t)^{2+2g}]+c_0[1+(1+t)^2]+$$
$$c_1[(1+t)+(1+t)^3]+c_2(1+t)^2.$$
The last thing to do is to find unknown $c_0, c_1$ and $c_2$.

Since a theorem of Tommasi (\cite{tommasi}) states that homologies of $\mathcal{H}_{g}$ are trivial, we have $\chi(\mathcal{H}_{g,0})=1$. From the results of Bergstrom (\cite{bergstrom},\cite{bertom}) it follows that $\chi(\mathcal{H}_{g,2})=2$ and $\chi(\mathcal{H}_{g,4})=-2g$. This gives us a system of 3 linear equations for the coefficients, and, after solving it, we get
$$c_0=-{g\over 8(g+1)},\,\, c_1={g\over 2g+1},\,\,\,\, c_2={g+1\over 4g}.$$
Finally, we get the following equation.

\begin{theorem}
\begin{equation}
\label{neqHfin}
\sum_{n=0}^{\infty}{t^{n}\over n!}\chi(\mathcal{H}_{g,n})={-1\over 2\cdot 2g(2g+1)(2g+2)}[(1+t)^{2-2g}+(1+t)^{2+2g}]-{g\over 8(g+1)}[1+(1+t)^2]+\\
\end{equation}
$${g\over 2g+1}[(1+t)+(1+t)^3]+{g+1\over 4g}(1+t)^2.$$
\end{theorem}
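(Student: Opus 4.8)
The plan is to start from the non-equivariant specialization (\ref{neqH}) and collect its terms according to the value $k=\chi(X_1(g))$ of the Euler characteristic of the fixed-point locus of each automorphism. First I would argue that $k$ ranges over only finitely many values. An automorphism $\phi$ of a genus-$g$ hyperelliptic curve $C$ either acts trivially on the underlying $\mathbb{CP}^1$, in which case it is the identity (with $\chi(X_1)=\chi(C)=2-2g$) or the hyperelliptic involution (fixing the $2g+2$ Weierstrass points, so $\chi(X_1)=2+2g$), or it descends to a nontrivial M\"obius transformation, which has exactly two fixed points on $\mathbb{CP}^1$; over each of these at most two points of $C$ can be fixed, so $k\in\{0,1,2,3,4\}$. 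Writing the generating function as $\sum_k c_k(1+t)^k$ then reduces the problem to determining the finitely many coefficients $c_k$.

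Next I would establish two structural reductions. The symmetry $c_k=c_{4-k}$ follows by composing each base-nontrivial automorphism $\phi$ with the central hyperelliptic involution $\iota$: the map $\phi\mapsto\phi\iota$ is a bijection of the relevant automorphisms preserving each stratum $\Xi_G$, and a short case analysis over the two $\mathbb{CP}^1$-fixed points (distinguishing whether each is a Weierstrass point) shows $\chi(X_1(\phi))+\chi(X_1(\phi\iota))=4$ in every case. I would also identify $c_{2-2g}=c_{2+2g}$ with the orbifold Euler characteristic of $\mathcal{H}_g$, since in (\ref{neqH}) the identity contributes one term $(1+t)^{2-2g}$ with weight $\sum_G \chi(\Xi_G)/|G|$, while the hyperelliptic involution, being the unique such element of every $G$, contributes $(1+t)^{2+2g}$ with the same weight. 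Quoting the known value of this orbifold characteristic from \cite{bini2} or \cite{my} fixes both extreme coefficients and leaves only $c_0,c_1,c_2$ unknown (with $c_3=c_1$, $c_4=c_0$).

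Finally I would pin these three down by extracting coefficients of $t^n$. Since $(1+t)^k=\sum_n\binom{k}{n}t^n$, comparison with the left-hand side gives $\chi(\mathcal{H}_{g,n})=\sum_k c_k\,k(k-1)\cdots(k-n+1)$. Taking $n=0,2,4$ and inserting the input values $\chi(\mathcal{H}_{g,0})=1$ (Tommasi, \cite{tommasi}), $\chi(\mathcal{H}_{g,2})=2$ and $\chi(\mathcal{H}_{g,4})=-2g$ (Bergstrom, \cite{bergstrom},\cite{bertom}) yields a $3\times 3$ linear system for $c_0,c_1,c_2$ whose solution is the displayed formula; one checks the system is nondegenerate, the $n=4$ equation isolating $c_0$ and the $n=0,2$ equations then determining $c_1,c_2$.

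I expect the main obstacle to be the structural analysis underlying the two reductions rather than the final linear algebra: one must justify carefully that only these five intermediate values of $k$ occur and that the involution-composition argument forces $c_k=c_{4-k}$ uniformly in $g$, including the degenerate configurations where a $\mathbb{CP}^1$-fixed point coincides with a branch point. Granting the cited inputs on $\chi(\mathcal{H}_{g,0})$, $\chi(\mathcal{H}_{g,2})$, $\chi(\mathcal{H}_{g,4})$ and on the orbifold Euler characteristic of $\mathcal{H}_g$, the remainder is a routine solution of three linear equations.
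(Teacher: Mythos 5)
Your proposal is correct and follows essentially the same route as the paper: the same finite list of fixed-point Euler characteristics $\{2-2g,\,2+2g,\,0,1,2,3,4\}$, the same symmetry $c_k=c_{4-k}$ via composition with the hyperelliptic involution, the same identification of $c_{2-2g}=c_{2+2g}$ with the orbifold Euler characteristic of $\mathcal{H}_g$, and the same $3\times 3$ linear system from the inputs $\chi(\mathcal{H}_{g,0})=1$, $\chi(\mathcal{H}_{g,2})=2$, $\chi(\mathcal{H}_{g,4})=-2g$. One remark: your form of the $c_0$-term, $c_0[1+(1+t)^4]$, is the correct one, and the $[1+(1+t)^2]$ appearing in the paper's displayed theorem is a typo, as confirmed both by the corollary's value $\chi(\mathcal{H}_{g,1})=2$ and by the consistency check in Section 4, where this term reappears as $[(1+t)^4+1]$.
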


\begin{corollary}
If $n>2g+2$, then
$$\chi(\mathcal{H}_{g,n})=(-1)^{n+1}{(2g+n-3)!\over 2\cdot 2g(2g+1)(2g+2)\cdot (2g-3)!}.$$
If $5\le n\le 2g+2$, then
$$\chi(\mathcal{H}_{g,n})=(-1)^{n+1}{(2g+n-3)!\over 2\cdot 2g(2g+1)(2g+2)\cdot (2g-3)!}-{1\over 2}{(2g-1)!\over (2g+2-n)!}.$$
Also
$$\chi(\mathcal{H}_{g,0})=1, \chi(\mathcal{H}_{g,1})=2, \chi(\mathcal{H}_{g,2})=2, \chi(\mathcal{H}_{g,3})=0,$$
$$\chi(\mathcal{H}_{g,4})=-2g, \chi(\mathcal{H}_{g,5})=0.$$
\end{corollary}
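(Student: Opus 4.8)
The plan is to read off $\chi(\mathcal{H}_{g,n})$ directly from the generating function of Theorem 1 by extracting the coefficient of $t^n$ and multiplying by $n!$. The only analytic input needed is the generalized binomial theorem, which gives $n!\,[t^n](1+t)^m = m(m-1)\cdots(m-n+1)$ for every (possibly negative) exponent $m$. Applying this to the two ``large'' summands, I would first record the two identities $n!\binom{2-2g}{n} = (-1)^n\frac{(2g+n-3)!}{(2g-3)!}$, obtained by factoring a sign out of each of the $n$ descending factors $2-2g,\,1-2g,\dots$, and $n!\binom{2+2g}{n} = \frac{(2g+2)!}{(2g+2-n)!}$; both make sense for $g\ge 2$ since then $2g-3\ge 1$.

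Next I would isolate which summands actually contribute for a given $n$. Every summand of the shape $(1+t)^m$ with $m$ a nonnegative integer is a polynomial of degree $m$, hence contributes nothing once $n>m$; the exponents appearing in the polynomial summands are all bounded (by $4$), while $(1+t)^{2+2g}$ contributes exactly when $n\le 2g+2$. Therefore for $n>2g+2$ only the single term $(1+t)^{2-2g}$ survives, and substituting the first identity into the prefactor $\frac{-1}{2\cdot 2g(2g+1)(2g+2)}$ yields the first case of the corollary at once. For $5\le n\le 2g+2$ the polynomial part has already died but $(1+t)^{2+2g}$ is still alive; adding its contribution and simplifying via $\frac{(2g+2)!}{2g(2g+1)(2g+2)} = (2g-1)!$ produces the extra term $-\frac{1}{2}\frac{(2g-1)!}{(2g+2-n)!}$, giving the second case.

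Finally, for the listed values $0\le n\le 5$ I would simply substitute $n$ into the full generating function, using the explicit $c_0,c_1,c_2$ together with the two factorial identities, and collect terms. The values $\chi(\mathcal{H}_{g,0})=1$, $\chi(\mathcal{H}_{g,2})=2$, $\chi(\mathcal{H}_{g,4})=-2g$ are precisely the normalizations that fixed $c_0,c_1,c_2$, so they require no work; for $n=1,3,5$ one checks that the several surviving contributions telescope. For instance at $n=5$ the two large terms are equal and opposite, so $\chi(\mathcal{H}_{g,5})=0$, and similarly the $n=3$ contributions cancel after clearing the common denominator $(g+1)(2g+1)$, while $\chi(\mathcal{H}_{g,1})=2$ drops out of the $n=0$ relation.

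I do not expect a genuine obstacle here: the statement is a formal extraction of Taylor coefficients, so the whole argument is bookkeeping. The only place demanding care is the algebraic simplification of the descending factorials into the clean factorial ratios above, together with keeping track of the two cut-offs $n=4$ and $n=2g+2$ that separate the three regimes; a sign error in the $(-1)^n$, or an off-by-one between $(2g+n-3)!$ and $(2g-3)!$, is the most likely pitfall.
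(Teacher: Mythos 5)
Your proposal is correct and is essentially the paper's own (implicit) derivation: the corollary is exactly the extraction $n!\,[t^n]$ from Theorem 1, where the polynomial summands die for $n>4$ and $(1+t)^{2+2g}$ dies for $n>2g+2$, and your identities $n!\binom{2-2g}{n}=(-1)^n\frac{(2g+n-3)!}{(2g-3)!}$, $n!\binom{2+2g}{n}=\frac{(2g+2)!}{(2g+2-n)!}$ and $\frac{(2g+2)!}{2g(2g+1)(2g+2)}=(2g-1)!$ are all right, as are the cancellations at $n=1,3,5$. One caution worth recording: the $c_0$-bracket in Theorem 1 as printed reads $[1+(1+t)^2]$, which is a typo for $[1+(1+t)^4]$ (forced by the pairing $c_k=c_{4-k}$ and confirmed by the check in Section 4), and your reading of the polynomial degrees as bounded by $4$ silently uses this corrected version --- necessarily so, since the printed version would yield $\chi(\mathcal{H}_{g,2})\neq 2$ and $\chi(\mathcal{H}_{g,4})\neq -2g$.
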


This answer was obtained using some external information: known answers
for 0, 2 and 4 points, but it's important to remark that  knowing only these three answers we can reconstruct the whole generating function. The coefficients $c_0,c_1,c_2$ have a nice form, which may be surprising in this approach using solution of a system of linear equations on them. Their properties will be studied in the next section, and in section 4 we'll give another proof of the Theorem 1.

\section{Calculation of the coefficients}

Consider a universal curve, i. e. a universal family $E\rightarrow \mathcal{H}_{g}$.
 The simplest invariant of a family is an orbifold Euler characteristic, which can be written as an integral with respect to the Euler characteristic over a base:
$$\chi^{orb}(E)=\int_{\mathcal{H}_{g}}{1\over |Aut(C)|}d\chi.$$

We suggest a following way for its generalization. All automorphism groups of different hyperellipic curves contain in a certain extension of the group $PGL(2,\mathbb{C})$. Therefore we can define some natural classes of such automorphisms. We can determine if an automorphism of a fiber belongs to a class $A$, and set $N_{A}(C)$ equal to the number of elements of $Aut(C)$ belonging to $A$. We define a following rational number:
\begin{equation}
\label{defA}
\chi^{A}(E)=\int_{\mathcal{H}_{g}}{N_{A}(C)\over |Aut(C)|}d\chi=\sum_{G}{\chi(\Xi_G)\over |G|}N_{A}(G),
\end{equation}
where $\Xi_G$ is a stratum of curves with an automorphism group isomorphic to $G$. 

Consider a space $M$ of pairs $(C,\varphi)$, where $\varphi$ is an automorphism of a curve $C$.
A fiber of the natural projection $$\mu:M\rightarrow \mathcal{H}_{g}$$ over a curve $C$ is exactly $Aut(C)$,
so fibers are discrete and we can induce an orbifold structure from $\mathcal{H}_{g}$ to $M$.

Consider a subspace $M_A$ in this space consisting of pairs with automorphisms from a class $A$. 
\begin{lemma}
The orbifold Euler characteristic of $M_A$ equals to $\chi^{A}(E)$.
\end{lemma}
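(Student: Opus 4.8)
The plan is to compute $\chi^{orb}(M_A)$ directly by pushing forward along $\mu$ and matching it with the right-hand side of (\ref{defA}). First I would stratify the base $\mathcal{H}_g$ by the strata $\Xi_G$, on each of which the automorphism group is isomorphic to a fixed finite group $G$ and $N_A$ takes the constant value $N_A(G)$ (this is exactly the extra refinement of the strata assumed in the Introduction). Since the orbifold Euler characteristic is the integral $\int d\chi$ against the additive (motivic) measure, it is additive over such a stratification, so it suffices to show that the part of $M_A$ lying over a single stratum $\Xi_G$ contributes $\frac{\chi(\Xi_G)}{|G|}N_A(G)$.

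Next I would make the induced orbifold structure on $M_A$ explicit. Over a point $C\in\Xi_G$ the versal deformation is a ball $\tilde U$ carrying a $G$-action with $\tilde U/G$ a chart of $\mathcal{H}_g$, and $\Xi_G$ is locally the fixed locus $\tilde U^{G}$. Lifting $M$ to $\tilde U$ identifies it with $\bigsqcup_{h\in G}\tilde U^{h}$, on which $G$ acts by $g\cdot(\tilde u,h)=(g\tilde u,\,ghg^{-1})$; this is the standard inertia description of the pairs $(C,\varphi)$. Restricting to $h\in A$ gives the local model of $M_A$, which is legitimate because membership in a class $A$ is detected by the number of fixed points on $\mathbb{CP}^1$, a conjugation invariant, so $A\cap G$ is a union of conjugacy classes. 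Over $\tilde U^{G}$ the stabilizer of $(\tilde u,h)$ is the centralizer $C_G(h)$, so the coarse points of $M_A$ above $C$ are the conjugacy classes contained in $A\cap G$, each carrying orbifold weight $1/|C_G(h)|$.

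Then the local contribution is $\chi(\Xi_G)\sum_{[h]\subset A\cap G}\frac{1}{|C_G(h)|}$, and here I would invoke the class equation: since $|C_G(h)|=|G|/|\mathrm{cl}(h)|$ and $A\cap G$ is a disjoint union of conjugacy classes, one has $\sum_{[h]}1/|C_G(h)|=\frac{1}{|G|}\sum_{[h]}|\mathrm{cl}(h)|=|A\cap G|/|G|=N_A(G)/|G|$. Summing the resulting contributions $\frac{\chi(\Xi_G)}{|G|}N_A(G)$ over all strata reproduces precisely the middle expression in (\ref{defA}), giving $\chi^{orb}(M_A)=\chi^{A}(E)$.

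The main obstacle is the bookkeeping in the second step: making precise that the orbifold structure induced from $\mathcal{H}_g$ is the inertia structure, so that a pair $(C,\varphi)$ acquires automorphism group $C_{\mathrm{Aut}(C)}(\varphi)$ rather than all of $\mathrm{Aut}(C)$, and checking that the conjugation action does not distort the count. The class-equation identity is exactly what reconciles these centralizer weights with the naive fibre count $N_A(C)/|\mathrm{Aut}(C)|$ built into the definition of $\chi^{A}(E)$; once this is in place, the statement follows formally from the additivity of $\int d\chi$ over the stratification.
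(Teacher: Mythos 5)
Your proof is correct, but it takes a noticeably heavier route than the paper's. The paper simply observes that the orbifold structure on $M$ is \emph{induced} from $\mathcal{H}_g$ --- each pair $(C,\varphi)$ carries weight $1/|\mathrm{Aut}(C)|$ --- so the restriction of $\mu$ to $M_A$ is a map with discrete fibers of cardinality $N_A(C)$, and one application of the Fubini formula for integration with respect to Euler characteristic gives $\chi^{orb}(M_A)=\int_{\mathcal{H}_g}N_A(C)\,d\chi/|\mathrm{Aut}(C)|=\chi^A(E)$ in one line, with no stratification and no group theory. You instead stratify by $\Xi_G$, model $M$ locally as the inertia-type quotient $\bigl(\bigsqcup_{h\in G}\tilde U^{h}\bigr)/G$ with stabilizers $C_G(h)$, and then use the class equation $\sum_{[h]\subset A\cap G}1/|C_G(h)|=|A\cap G|/|G|$ to recover $N_A(G)/|G|$ per stratum. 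One small correction of emphasis: the structure the paper induces on $M$ is \emph{not} the inertia structure you describe --- it assigns weight $1/|\mathrm{Aut}(C)|$ to every one of the $N_A(C)$ points of the fiber, rather than weight $1/|C_{\mathrm{Aut}(C)}(\varphi)|$ to each conjugacy class --- but the class equation shows the two conventions integrate to the same number, which is exactly what your last step verifies. What your argument buys is a genuinely finer picture (it identifies the orbifold $M_A$ with a piece of the inertia stack and makes the conjugation-invariance of the classes $A$ explicit, which the paper leaves implicit); what it costs is that the Fubini-style additivity you invoke stratum by stratum already suffices, by itself, to prove the lemma directly from the fiber count, so the inertia model and class equation are dispensable for the statement at hand.
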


\begin{proof}
Consider a restriction of the projection $\mu$ on $M_A$. Its fiber over a curve $C$ is a finite set with
$N_{A}(C)$ elements, so we can apply the Fubini formula for the integration with respect to the Euler characteristic:
$$\chi^{orb}(M_A)=\int_{\mathcal{H}_{g}}\chi(\mu^{-1}(C))d\chi^{orb}=\int_{\mathcal{H}_{g}}N_{A}(C){d\chi\over |Aut(C)|}=\chi^{A}(E).$$
\end{proof}

\bigskip

Let us  calculate these coefficients in our case.
Let $B(X,n)$ denote the configuration space of unordered $n$-tuples of points of a given space $X$.

\begin{lemma}
The orbifold Euler characteristic of $B(\mathbb{C}^{*},k)/\mathbb{C}^{*}$ equals to ${(-1)^{1-k}\over k}$.
\end{lemma}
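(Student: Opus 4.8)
The plan is to lift the computation from the unordered configuration space to the ordered one, where the $\mathbb{C}^{*}$-action becomes free. Write $F=F(\mathbb{C}^{*},k)$, so that $B(\mathbb{C}^{*},k)=F/S_{k}$ and the object in question is the quotient stack $[F/(S_{k}\times\mathbb{C}^{*})]$. The key observation is that $\mathbb{C}^{*}$ acts \emph{freely} on the ordered space $F$: if $\lambda\cdot(x_{1},\dots,x_{k})=(x_{1},\dots,x_{k})$ then $\lambda x_{1}=x_{1}$ with $x_{1}\neq 0$, forcing $\lambda=1$. On the unordered space $B(\mathbb{C}^{*},k)$ the action is \emph{not} free, since a configuration invariant under multiplication by a root of unity has nontrivial stabilizer; this is precisely why I would pass to $F$ first. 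As $\mathbb{C}^{*}$ is a direct factor acting freely and commuting with $S_{k}$, the stack splits as $[F/(S_{k}\times\mathbb{C}^{*})]=[(F/\mathbb{C}^{*})/S_{k}]$, where $Q:=F/\mathbb{C}^{*}$ is an honest quasiprojective variety and all orbifold structure now comes from $S_{k}$ alone.

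Second, I would identify $Q$ explicitly. Using the free action, every ordered configuration has a unique representative with $x_{1}=1$, and the map $(x_{1},\dots,x_{k})\mapsto(x_{2}/x_{1},\dots,x_{k}/x_{1})$ gives an isomorphism $Q\cong F(\mathbb{C}\setminus\{0,1\},k-1)$, since the normalized coordinates are distinct, nonzero, and different from $1$. To compute $\chi(Q)$ I would invoke the product formula for Euler characteristics of ordered configuration spaces, which is exactly (\ref{neqF}) with $G$ trivial: it gives $\sum_{n}\frac{t^{n}}{n!}\chi(F(Y,n))=(1+t)^{\chi(Y)}$, hence $\chi(F(Y,n))=\chi(Y)(\chi(Y)-1)\cdots(\chi(Y)-n+1)$. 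Taking $Y=\mathbb{C}\setminus\{0,1\}$ with $\chi(Y)=1-2=-1$ yields
\[
\chi(Q)=\chi\bigl(F(\mathbb{C}\setminus\{0,1\},k-1)\bigr)=(-1)(-2)\cdots(-(k-1))=(-1)^{k-1}(k-1)!.
\]

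Finally, I would carry out the $S_{k}$-orbifold quotient. For a finite group $G$ acting on a variety $Q$, the rational orbifold Euler characteristic defined by $\int\frac{1}{|\mathrm{stab}|}\,d\chi$ satisfies $\chi^{orb}([Q/G])=\chi(Q)/|G|$; this follows by stratifying $Q$ by stabilizer type, since each orbit of a stratum with stabilizer of order $m$ consists of $|G|/m$ points, so that the weight $1/m$ and the orbit size combine to the uniform factor $1/|G|$. Applying this with $G=S_{k}$ gives
\[
\chi^{orb}\bigl(B(\mathbb{C}^{*},k)/\mathbb{C}^{*}\bigr)=\frac{\chi(Q)}{k!}=\frac{(-1)^{k-1}(k-1)!}{k!}=\frac{(-1)^{1-k}}{k},
\]
as claimed. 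The step requiring the most care is the first one: disentangling the free $\mathbb{C}^{*}$-action from the non-free $S_{k}$-action so that the orbifold weighting is produced entirely by $S_{k}$. Once the splitting $[(F/\mathbb{C}^{*})/S_{k}]$ is justified and $Q$ is identified as an ordinary configuration space, the remaining computation is routine.
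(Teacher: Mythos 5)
Your proof is correct, but it routes the computation differently from the paper. The paper works directly with unordered configurations: it normalizes a chosen point to $1$, identifies the result with $B(\mathbb{C}\setminus\{0,1\},k-1)$, computes $\chi(B(\mathbb{C}\setminus\{0,1\},k-1))=(-1)^{k-1}$ from the unordered generating function $\sum_m t^m\chi(B(X,m))=(1+t)^{\chi(X)}$, and divides by $k$ for the $k$ choices of which point goes to $1$. You instead lift to the ordered space $F(\mathbb{C}^{*},k)$, quotient first by the free $\mathbb{C}^{*}$-action to get the honest variety $Q\cong F(\mathbb{C}\setminus\{0,1\},k-1)$ with $\chi(Q)=(-1)^{k-1}(k-1)!$ via the falling-factorial formula, and then use the multiplicativity $\chi^{orb}([Q/G])=\chi(Q)/|G|$ for the finite group $S_k$. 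The two arguments are cousins (both hinge on normalizing by $\mathbb{C}^{*}$ to land in $\mathbb{C}\setminus\{0,1\}$), but yours buys genuine rigor at one point the paper treats loosely: the paper's remark that the normalized configuration ``has no additional symmetries'' is only true generically, and on the lower-dimensional strata of symmetric configurations the $k$-to-$1$ normalization map degenerates to $k/m$-to-$1$ over points with stabilizer of order $m$ --- exactly compensated by the orbifold weight $1/m$, though the paper does not spell this out. Your splitting $[F/(S_k\times\mathbb{C}^{*})]=[(F/\mathbb{C}^{*})/S_k]$, legitimate because the $\mathbb{C}^{*}$-action on the ordered space is free and commutes with $S_k$, together with the stratification argument you sketch for $\chi^{orb}=\chi/|G|$, handles these symmetric strata uniformly and makes the division by $k$ (in your version, by $k!$ after an extra $(k-1)!$) automatic rather than asserted. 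The paper's version is shorter; yours is the more airtight bookkeeping of the same geometric idea.
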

\begin{proof}
We can choose an arbitrary point on $B(\mathbb{C}^{*},k)$ and divide the coordinates of all points by its coordinate. We'll get this point at 1 and $k-1$ distinct points on $\mathbb{C}\setminus\{0,1\}$, and this configuration has no additional symmetries.

Recall that for any space $X$
$$\sum_{m=0}^{\infty}t^{m}\chi(B(X,m))=(1+t)^{\chi(X)}.$$
It follows, for example, from the equation (\ref{neqF}).
Therefore
$$\sum_{m=0}^{\infty}t^{m}\chi(B(\mathbb{C}\setminus\{0,1\},m))=(1+t)^{-1},$$
and $\chi(B(\mathbb{C}\setminus\{0,1\},m))=(-1)^{m}$.

Since the points which goes to 1 can be chosen in $k$ ways, we get
$$\chi^{orb}(B(\mathbb{C}^{*},k)/\mathbb{C}^{*})={\chi(B(\mathbb{C}\setminus\{0,1\},k-1))\over k}={(-1)^{k-1}\over k}.$$
\end{proof}

\begin{lemma}
Let $A_n$ be a class of automorphisms of order $n$ of $\mathbb{C}^{*}$ with $N$ unordered distinct marked points. Then
\begin{equation}
\label{phi}
\chi^{A_n}(B(\mathbb{C}^{*},N)/\mathbb{C}^{*})=(-1)^{1-{N\over n}}{\varphi(n)\over N},
\end{equation} 
where $\varphi(n)$ is the Euler function of $n$, i.e. the number of integers less than $n$ and coprime with $n$.
\end{lemma}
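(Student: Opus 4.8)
The plan is to compute $\chi^{A_n}(B(\mathbb{C}^{*},N)/\mathbb{C}^{*})$ as the orbifold Euler characteristic of a space of pairs, exactly as in Lemma 1, and then to reduce it to Lemma 2. First I would note that an automorphism of $\mathbb{C}^{*}$ of finite order $n$ (coming from the $PGL(2,\mathbb{C})$ normalization with the two fixed points placed at $0$ and $\infty$) is multiplication by a primitive $n$th root of unity $\zeta$, and there are exactly $\varphi(n)$ of these. For such a map to preserve an $N$-point configuration, that configuration must be a union of $\langle\zeta\rangle$-orbits, each of length $n$; hence the statement is vacuous ($\chi^{A_n}=0$) unless $n\mid N$, which I assume from now on. By the Fubini argument of Lemma 1, applied with the configuration space $B(\mathbb{C}^{*},N)/\mathbb{C}^{*}$ in place of $\mathcal{H}_g$, the quantity $\chi^{A_n}(B(\mathbb{C}^{*},N)/\mathbb{C}^{*})$ equals $\chi^{orb}(M_{A_n})$, where $M_{A_n}$ is the space of pairs $(C,\varphi)$ with $\varphi$ an order-$n$ symmetry of the configuration $C$.

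Next I would split $M_{A_n}=\bigsqcup_{\zeta}M^{(\zeta)}$ over the $\varphi(n)$ primitive $n$th roots $\zeta$, where $M^{(\zeta)}$ consists of the pairs whose second coordinate is multiplication by $\zeta$; relabelling roots identifies all summands as orbifolds, so $\chi^{orb}(M_{A_n})=\varphi(n)\,\chi^{orb}(M^{(\zeta)})$. To analyse a single $M^{(\zeta)}$ I would use the $n$-fold covering map $q\colon\mathbb{C}^{*}\to\mathbb{C}^{*}$, $w\mapsto w^{n}$. It sets up a bijection between $\langle\zeta\rangle$-invariant $N$-point configurations $C$ upstairs and arbitrary $N/n$-point configurations $D=q(C)$ downstairs, and it intertwines the scaling action $w\mapsto\lambda w$ upstairs with $w\mapsto\lambda^{n}w$ downstairs. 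Since $\lambda\mapsto\lambda^{n}$ is a surjection $\mathbb{C}^{*}\to\mathbb{C}^{*}$ with kernel $\langle\zeta\rangle$, passing to scaling orbits identifies the coarse space underlying $M^{(\zeta)}$ with that of $B(\mathbb{C}^{*},N/n)/\mathbb{C}^{*}$.

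The point that needs care — and which I expect to be the main obstacle — is the orbifold structure, not the coarse identification. Writing $\Gamma(C)$ for the group of roots of unity fixing $C$ and $\Gamma(D)$ for the analogous group for $D$, one has $\lambda\in\Gamma(C)$ iff $\lambda^{n}\in\Gamma(D)$, so $q$ induces an exact sequence $1\to\langle\zeta\rangle\to\Gamma(C)\to\Gamma(D)\to 1$ and $|\Gamma(C)|=n\,|\Gamma(D)|$. Thus every configuration in $M^{(\zeta)}$ carries a generic extra $\mathbb{Z}/n$ of automorphisms, generated by $\zeta$ itself, and since $\chi^{orb}$ is assembled stratum by stratum as $\sum\chi(\text{stratum})/|\text{aut}|$, this multiplies the outcome by $1/n$, giving $\chi^{orb}(M^{(\zeta)})=\tfrac1n\,\chi^{orb}(B(\mathbb{C}^{*},N/n)/\mathbb{C}^{*})$. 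Isolating this factor $\tfrac1n$ is the delicate step, since a naive count of $\varphi(n)$ disjoint copies of $B(\mathbb{C}^{*},N/n)/\mathbb{C}^{*}$ would overshoot by exactly $n$. Combining the three relations and invoking Lemma 2 with $k=N/n$ then yields $\chi^{A_n}=\varphi(n)\cdot\tfrac1n\cdot\tfrac{(-1)^{1-N/n}}{N/n}=(-1)^{1-N/n}\tfrac{\varphi(n)}{N}$, as claimed.
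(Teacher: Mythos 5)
Your proof is correct and follows essentially the same route as the paper: both reduce $\chi^{A_n}$ to $\chi^{orb}(M_{A_n})$ via Lemma 1, fiber over the $\varphi(n)$ primitive $n$th roots of unity, and use the $n$th-power map to identify invariant $N$-configurations with $N/n$-configurations, invoking Lemma 2 and a factor $\tfrac1n$. Your stabilizer exact sequence $1\to\langle\zeta\rangle\to\Gamma(C)\to\Gamma(D)\to 1$ is just a more explicit justification of that $\tfrac1n$, which the paper dispatches with the remark that the $n$th power is an $n$-fold covering.
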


\begin{proof}
As above, define $M_{A_n}(B(\mathbb{C}^{*},N)/\mathbb{C}^{*})$ as the space of pairs
($N$-tuple of points, its automorphism of order $n$). Lemma 1 implies that
$$\chi^{A_n}(B(\mathbb{C}^{*},N)/\mathbb{C}^{*})=\chi^{orb}(M_{A_n}(B(\mathbb{C}^{*},N)/\mathbb{C}^{*})).$$

Consider a map $q:M_{A_n}\rightarrow \mathbb{C}^{*}$, transforming a pair ($N$-tuple of points, its automorphism $h$ of order $n$) to $h$. Image of $q$ consists of $\varphi(n)$ primitive roots of unity.
The fiber $q^{-1}(h)$ consists of all $N$-tuples invariant under $h$.

Let us calculate the orbifold Euler characteristic of $q^{-1}(h)$.
Consider an $N$-tuple of distinct points on $\mathbb{C}^{*}$ invariant under $h$.
At first, let's raise the coordinates of all these points into $n$th power. Now we have ${N\over n}$ unordered distinct points on $\mathbb{C}^{*}$ modulo the action of $\mathbb{C}^{*}$, so for computing the orbifold Euler characteristic we can use the previous lemma -- it gives us $(-1)^{1-{N\over n}}{n\over N}$. Since $n$th power is a $n$-fold covering, we get 
$$\chi^{orb}(q^{-1}(h))=(-1)^{1-{N\over n}}{n\over N}\cdot {1\over n}={(-1)^{1-{N\over n}}\over N}.$$

Now we can apply the Fubini formula:
$$\chi^{A_n}(B(\mathbb{C}^{*},N)/\mathbb{C}^{*})=\chi^{orb}(M_{A_n}(B(\mathbb{C}^{*},N)/\mathbb{C}^{*}))=$$
$$\int_{\mathbb{C}^{*}}\chi^{orb}(q^{-1}(h))d\chi=\varphi(n)\chi^{orb}(q^{-1}(h))=(-1)^{1-{N\over n}}{\varphi(n)\over N}.$$
\end{proof}

\section{The equivariant answer}

The equation (\ref{eqH}) says that
$$\sum_{n=0}^{\infty}t^{n}\chi^{S_n}(\mathcal{H}_{g,n})=\sum_{G}{\chi(\Xi_G)\over |G|}\sum_{g\in G}\prod_{k=1}^{\infty}(1+p_{k}t^k)^{\chi(X_k(g))\over k}.$$

Suppose that we choose a set of classes $\mathcal{A}_j$ of automorphisms of hyperelliptic  genus $g$ curves such that:

1) $\mathcal{A}_{i}\cap \mathcal{A}_{j}=\emptyset$ for $i\neq j$.

2) Every automorphism of a hyperellyptic curve of finite order belongs to one of $\mathcal{A}_j$.

3) For every $k$ and $j$ for $g\in \mathcal{A}_{j}$ the number  $\chi(X_{k}(g))$ does not depend on $g$.
Therefore in can be denoted as $\chi_{k}(\mathcal{A}_j)$.

From the definition of $\chi^{\mathcal{A}_j}(\mathcal{H}_{g})$ it is clear that under these conditions the equation (\ref{eqH}) can be rewritten in a form
$$\sum_{n=0}^{\infty}t^{n}\chi^{S_n}(\mathcal{H}_{g,n})=\sum_{j}\chi^{\mathcal{A}_j}(\mathcal{H}_{g})\prod_{k=1}^{\infty}(1+p_{k}t^k)^{\chi_k(\mathcal{A}_{j})\over k}.$$

Now let us describe these classes.
Recall that if an automorphism of a hyperelliptic curve is not equal to the identity of to the hyperelliptic involution, its restriction on $\mathbb{CP}^1$ has 2 fixed points. 
We distinguish all possible symmetries of a hyperelliptic curve by the structure of fixed points and the order $n$ of their restriction on $\mathbb{CP}^{1}$. For each stratum we calculate the corresponding coefficient $\chi^{\mathcal{A}_j}(\mathcal{H}_{g})$ and
the structure of all orbits on a covering. By $N$ we denote the number of non-fixed ramification points on $\mathbb{CP}^{1}$.

\bigskip

1. Identity. The coefficient equals to $-{1\over 2\cdot 2g(2g+1)(2g+2)}$ as the orbifold Euler characteristic of $\mathcal{H}_{g}$, the monomial equals to $(1+p_1t)^{2-2g}$.

\medskip

2. Hyperelliptic involution. The coefficient is the same, all non-ramification points have order 2, so the monomial equals to $(1+p_1t)^{2+2g}(1+p_2t^2)^{-2g}$.

\medskip

3. One ramification point is fixed and fibers over the second fixed point do not interchange.
In this case $N=2g+1$, and by lemma 3 the coefficient equals to $(-1)^{1-{n\over N}}{\varphi(n)\over 2N}$
(the factor ${1\over 2}$ is added because of a hyperelliptic involution), and since $N$ is odd, the coefficient equals ${\varphi(n)\over 2(2g+1)}$. All points except three fixed ones have order $n$,
so the monomial equals to $(1+p_1t)^{3}(1+p_nt^n)^{-{2g+1\over n}}$.

\medskip

4. One ramification point is fixed and fibers over the second fixed point  interchange.
The factor is exactly the same as in the previous case, but the structure of orbits is slightly different: one point is fixed, preimages of the second fixed point gives an orbit of length 2, all other ramification points have order $n$, and all other points have order $2n$. Hence the monomial equals to
$(1+p_1t)^{1}(1+p_2t^2)(1+p_nt^n)^{2g+1\over n}(1+p_{2n}t^{2n})^{-{2g+1\over n}}$.

\medskip

5. No ramification points are fixed ($N=2g+2$), ${N\over n}$ is even and $n$ is even.
In this case the preimages of fixed points simultaneously interchange or do not interchange.
The coefficient is $-{\varphi(n)\over 4N}$: from lemma 3 we get $-{\varphi(n)\over N}$, but we should multiply it by one ${1\over 2}$  because of the involution, and by the second one  because we cannot distinguish the fixed points. If the preimages of the fixed points are fixed, we have 4 fixed points and all other points have order $n$, and the monomial equals to  
$(1+p_1)^4(1+p_nt^n)^{-{2g+2\over n}}$. If they are not fixed, we have 4 points of order 2 and all other points have order $n$, so the monomial equals to $(1+p_2t^2)^2(1+p_nt^n)^{-{2g+2\over n}}$.

\medskip

6. No ramification points are fixed ($N=2g+2$), ${N\over n}$ is even and $n$ is odd.
The coefficient is the same as in the previous case, but the structure of orbits is different if fibers interchange: ramification points have order $n$, but generic points have order $2n$, so the corresponding monomial equals to $(1+p_2t^2)^2(1+p_nt^n)^{{2g+2\over n}}(1+p_{2n}t^{2n})^{-{2g+2\over n}}$.

\medskip

7. No ramification points are fixed ($N=2g+2$), ${N\over n}$ is odd, so $n$ is even.
In this case the fibers over one of fixed points interchange, and over second one do not interchange,
so we can distinguish fixed points and the coefficient equals to ${\varphi(n)\over 2N}$.
We have 2 fixed points, 2 points of the order 2, and all other points have order $n$, so the corresponding monomial has a form $(1+p_1t)^2(1+p_2t^2)(1+p_nt^n)^{-{2g+2\over n}}$.

\medskip

8. Two of ramification points are fixed ($N=2g$), $n$ is odd. 
In this case the coefficient equals $-{\varphi(n)\over 4N}$, since we cannot distinguish fixed points
and $N/n$ is even.
There are two possible structures of orbits: $g^n$ can be identical or an involution.
In the first case we have 2 fixed points, all other have order $n$ and the monomial equals to
$(1+p_1t)^2(1+p_nt^n)^{-{2g\over n}}$. In the second only ramification points have order $n$, and
the monomial equals to $(1+p_1t)^2(1+p_nt^n)^{2g\over n}(1+p_{2n}t^{2n})^{-{2g\over n}}$.

\medskip

9. Two of ramification points are fixed ($N=2g$), $n$ is even. 
The coefficient equals $(-1)^{1-{N\over n}}{\varphi(n)\over 2N}$, $n$th power of an automorphism is an involution, so the monomial equals to $(1+p_1t)^2(1+p_nt^n)^{2g\over n}(1+p_{2n}t^{2n})^{-{2g\over n}}$.

\bigskip

We get a final answer:

\begin{theorem}
$$\sum_{k=0}^{\infty}t^{k}\chi^{S_k}(\mathcal{H}_{g,k})=$$
$$-{1\over 2\cdot 2g\cdot (2g+1)\cdot  (2g+2)}[(1+p_1t)^{2-2g}+(1+p_1t)^{2+2g}(1+p_2t^2)^{-2g}]+$$
$$\sum_{n|(2g+1)}{\varphi(n)\over 2(2g+1)}[(1+p_1t)^{3}(1+p_nt^n)^{-{2g+1\over n}}+(1+p_1t)^{1}(1+p_2t^2)(1+p_nt^n)^{2g+1\over n}(1+p_{2n}t^{2n})^{-{2g+1\over n}}]-$$
$$\sum_{n|(g+1),2|n}{\varphi(n)\over 4(2g+2)}[(1+p_1t)^4(1+p_nt^n)^{-{2g+2\over n}}+(1+p_2t^2)^2(1+p_nt^n)^{-{2g+2\over n}}]-$$
$$\sum_{n|(g+1),2\not|n}{\varphi(n)\over 4(2g+2)}[(1+p_1t)^4(1+p_nt^n)^{-{2g+2\over n}}+(1+p_2t^2)^2(1+p_nt^n)^{{2g+2\over n}}(1+p_{2n}t^{2n})^{-{2g+2\over n}}]+$$
$$\sum_{n|2g+2,n\not| g+1}{\varphi(n)\over 2(2g+2)}(1+p_1t)^2(1+p_2t^2)(1+p_nt^n)^{-{2g+2\over n}}-$$
$$\sum_{n|g,2\not|n}{\varphi(n)\over 4\cdot 2g}[(1+p_1t)^2(1+p_nt^n)^{-{2g\over n}}+(1+p_1t)^2(1+p_nt^n)^{2g\over n}(1+p_{2n}t^{2n})^{-{2g\over n}}]-$$
$$\sum_{n|2g,2|n}(-1)^{1-{2g\over n}}{\varphi(n)\over 2\cdot 2g}(1+p_1t)^2(1+p_nt^n)^{2g\over n}(1+p_{2n}t^{2n})^{-{2g\over n}}.$$

Everywhere we assume $n>1$.
\end{theorem}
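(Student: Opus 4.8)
The plan is to start from the reformulation of (\ref{eqH}) in terms of the disjoint classes $\mathcal{A}_j$,
$$\sum_{n=0}^{\infty}t^{n}\chi^{S_n}(\mathcal{H}_{g,n})=\sum_{j}\chi^{\mathcal{A}_j}(\mathcal{H}_{g})\prod_{k=1}^{\infty}(1+p_{k}t^k)^{\chi_k(\mathcal{A}_{j})/k},$$
and to establish the theorem by producing, for each class, its coefficient $\chi^{\mathcal{A}_j}(\mathcal{H}_g)$ and its monomial $\prod_k(1+p_kt^k)^{\chi_k(\mathcal{A}_j)/k}$. The first step is to set up and justify the classification underlying the nine cases, i.e. to verify conditions (1)--(3). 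Any finite-order automorphism $\sigma$ of a genus-$g$ hyperelliptic curve $C\to\mathbb{CP}^1$ is the identity, the hyperelliptic involution $\iota$, or projects to an element $\bar\sigma\in PGL(2,\mathbb{C})$ of some order $n>1$; after conjugation $\bar\sigma$ is the rotation $z\mapsto\zeta z$ fixing $0$ and $\infty$. I would then split the remaining automorphisms by how many of the two fixed points $0,\infty$ are branch (Weierstrass) points, which fixes the number $N\in\{2g+2,2g+1,2g\}$ of branch points lying on $\mathbb{C}^{*}$, together with the lifting behaviour over each fixed point. Since the Euler characteristics $\chi_k(\sigma)$ depend only on this discrete data, condition (3) is immediate, and disjointness and exhaustiveness follow from the trichotomy above.

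For the coefficients I would use the identification, provided by Lemma 1, of $\chi^{\mathcal{A}_j}(\mathcal{H}_g)$ with the orbifold Euler characteristic of the space of pairs (curve, automorphism in $\mathcal{A}_j$). A curve with such a marked automorphism is the same datum as a $\bar\sigma$-invariant branch divisor; after normalising $\bar\sigma$ to fix $0,\infty$ the $N$ non-fixed branch points form an order-$n$ invariant configuration on $\mathbb{C}^{*}$ taken modulo the commuting scaling, whose orbifold count is exactly $(-1)^{1-N/n}\varphi(n)/N$ by Lemma 3. Passing from this to $\chi^{\mathcal{A}_j}(\mathcal{H}_g)$ introduces a factor $\tfrac12$ because $\bar\sigma$ has two lifts to $\mathrm{Aut}(C)$ differing by $\iota$, and a further $\tfrac12$ precisely when the two fixed points carry identical symmetry data and so cannot be told apart. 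Tracking these halvings together with the sign $(-1)^{1-N/n}$ reproduces the prefactors $\tfrac{\varphi(n)}{2(2g+1)}$, $-\tfrac{\varphi(n)}{4(2g+2)}$, $\tfrac{\varphi(n)}{2(2g+2)}$, $-\tfrac{\varphi(n)}{4\cdot 2g}$ and $(-1)^{1-2g/n}\tfrac{\varphi(n)}{2\cdot 2g}$ of the statement, while the divisibility ranges ($n\mid 2g+1$; $n\mid g+1$; $n\mid 2g+2$, $n\not| g+1$; $n\mid g$; $n\mid 2g$) merely record that the $N$ non-fixed branch points must break into complete orbits of length $n$.

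The monomials come from the orbit structure of $\sigma$ on $C$, using $\chi(C)=2-2g$ and the additivity $\chi(C)=\sum_k\chi(X_k)$. For each class I would first decide whether the lift has order $n$ or $2n$ --- this depends on whether the two sheets over a non-branch fixed point are interchanged and, when a fixed point is itself a branch point, on the parity of $n$ --- then list the fixed points and the length-$2$ orbits lying over $0$ and $\infty$, note that the $N$ non-fixed branch points always give orbits of length $n$, and recover the remaining $\chi(X_n)$ or $\chi(X_{2n})$ by additivity. For example in case 4 the short pieces contribute Euler characteristics $1+2+(2g+1)$, so $\chi(X_{2n})=(2-2g)-(2g+4)=-2(2g+1)$ and the monomial acquires the factor $(1+p_{2n}t^{2n})^{-(2g+1)/n}$; the other cases are entirely parallel. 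Substituting the coefficients and monomials into the reformulated (\ref{eqH}) and collecting terms by the divisibility conditions yields the displayed formula.

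I expect the real difficulty to lie not in the Euler-characteristic bookkeeping but in the lifting analysis that controls both the case division and the factors of $\tfrac12$. One must verify that an order-$n$ element of $PGL(2,\mathbb{C})$ preserving the branch divisor lifts to $\mathrm{Aut}(C)$, and then, writing $C$ as $y^2=f(z)$ and the lift as $(z,y)\mapsto(\zeta z,cy)$ with $c^2=\zeta^{2g+1}$ (or the analogous normal form), decide from the parity of $n$ and from whether $0,\infty$ are branch points whether $\sigma^n$ equals $1$ or $\iota$, whether the two sheets over a fixed point are swapped, and whether the local rotation numbers at the two fixed points render them distinguishable. This is exactly where the two square-root choices for $c$ split case 8 into its order-$n$ and order-$2n$ subcases for odd $n$ while forcing a single subcase for even $n$ (case 9), and where the extra factor $\tfrac12$ for indistinguishable fixed points must be inserted or withheld. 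Getting every such factor right, uniformly in $g$, is the crux of the argument.
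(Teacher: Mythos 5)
Your proposal reconstructs the paper's own proof essentially verbatim: the same reformulation of (\ref{eqH}) via the disjoint classes $\mathcal{A}_j$, the same classification of finite-order automorphisms by the number of fixed ramification points and lifting behaviour (the paper's nine cases), the same use of Lemmas 1 and 3 for the coefficients with the halvings for the hyperelliptic involution and for indistinguishable fixed points, and the same orbit-structure bookkeeping for the monomials --- your case-4 computation $\chi(X_{2n})=(2-2g)-(1+2+(2g+1))=-2(2g+1)$ matches the paper exactly. The one spot where your stated rules need the care you yourself flag is cases 7 and 9, where the blanket factors ``$\tfrac12$ for the two lifts'' and ``$\tfrac12$ for identical fixed-point data'' must be offset by the fact that \emph{both} lifts of the M\"obius transformation lie in the same class (so $N_A(C)$ counts both), which is how the paper's prefactors ${\varphi(n)\over 2(2g+2)}$ and $(-1)^{1-{2g\over n}}{\varphi(n)\over 2\cdot 2g}$ arise without an extra quarter.
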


\bigskip

It is useful to recall two identities with the Euler function:
$$\sum_{a|n}\varphi(a)=n$$
and $$\sum_{a|n}(-1)^{n/a}\varphi(a)=0,\,\,\,\,\mbox{\rm if $n$ is even}.$$

\bigskip

Let us check the correlation of this answer with the one for non-equivariant case (obtained in the Theorem 1).
If we set all $p_i=0$ for $i>1$ and $p_1=1$, then we'll get
$$\sum_{k=0}^{\infty}{t^{k}\over k!}\chi(\mathcal{H}_{g,k})=-{1\over 2\cdot 2g\cdot (2g+1)\cdot  (2g+2)}[(1+t)^{2-2g}+(1+t)^{2+2g}]+$$
$$\sum_{n|2g+1}{\varphi(n)\over 2(2g+1)}[(1+t)^3+(1+t)]-\sum_{n|g+1}{\varphi(n)\over 4(2g+2)}[(1+t)^4+1]+$$
$$\sum_{n|2g}(-1)^{1-{2g\over n}}{\varphi(n)\over 2\cdot 2g}(1+t)^2+\sum_{n|2g+2,n\not|
g+1}{\varphi(n)\over 2(2g+2)}(1+t)^2.$$

We have $$\sum_{n|2g+1,n>1}\varphi(n)=(2g+1)-1=2g, \sum_{n|g+1,n>1}\varphi(n)=(g+1)-1=g,$$
$$\sum_{n|2g,n>1}(-1)^{1-{2g\over n}}\varphi(n)=0-(-1)=1, \sum_{n|2g+2,n\not{|}g+1}\varphi(n)=(2g+2)-(g+1)=g+1,$$
so
$$\sum_{k=0}^{\infty}{t^{k}\over k!}\chi(\mathcal{H}_{g,k})=-{1\over 2\cdot 2g\cdot (2g+1)\cdot  (2g+2)}[(1+t)^{2-2g}+(1+t)^{2+2g}]+$$
$${g\over 2g+1}[(1+t)^3+(1+t)]-{g\over 8(g+1)}[(1+t)^4+1]+[{1\over 4g}+{1\over 4}](1+t)^{2},$$
that is a correct answer for the non-equivariant Euler characteristics.

\medskip

For $g=2$ we get the same answer as in \cite{my}.

\medskip

\bigskip

\centerline{\large \bf Appendix A: comparison up to 4 points}

\bigskip

It is important to check this answer for small number of points, say, modulo $t^5$.
From the first line of the Theorem 2 we always get 

$${-1\over 2\cdot 2g\cdot (2g+1)\cdot (2g+2)}[2 +4p_1\cdot t + ((2+4g^2)p_1^2 - 
 2g\cdot p_2)t^2 + (4g^2\cdot p_1^3 - 4g(g+1)\cdot p_1 p_2) t^3 + $$
$$({4g^4-g^2\over 3}p_1^4 +(-4g^3-6g^2-2g) p_1^2 p_2 +(2g^2+g)p_2^2) t^4].$$
\bigskip
In what follows it is convenient to introduce $M_k(n)$ as 1 if $g=n\,\,\, (\mod k)$ and 0 otherwise.
\bigskip

From the second line we get
$${g\over 2g+1}[2+4p_1t+(3p_1^2+p_2)t^2+(p_1^3+p_1p_2)t^3]-M_{3}(1){2\over 3}p_1p_3t^4,$$ 

from  the next two lines we get

$${-g\over 8(g+1)}[2+4p_1t+(6p_1^2+2p_2)t^2+4p_1^3t^3+(p_1^4+p_2^2)t^4]-{M_2(1)\over 8}[-2p_2t^2-4p_1p_2t^3+(gp_2^2-6p_1^2p_2)t^4]+$$
$${2M_3(2)\over 3}p_1p_3t^4+{M_4(3)\over 4}p_4t^4,$$

from the next one we get

$${1\over 4}[1+2p_1t+(p_1^2+p_2)t^2+2p_1p_2t^3+p_1^2p_2t^4]+{M_2(0)\over 4}[-p_2t^2-2p_1p_2t^3+(-p_1^2p_2+{g\over 2}p_2^2)t^4]-{M_4(1)\over 4}p_4t^4,$$

and from the last two we get 

$${1\over 4g}[1+2p_1t+p_1^2t^2]+{(-1)^{1-g}\over 4}[p_2t^2+2p_1p_2t^3+(p_1^2p_2+{g-1\over 2}p_2^2-p_4)t^4]+{M_2(0)(-1)^{1-{g\over 2}}\over 4}p_4t^4.$$

As we checked before, all coefficients at powers of $p_1$ (which correspond to the non-equivariant Euler characteristic) are correct. Coefficient at $p_3$ always vanishes, coefficient at $p_1p_3$ is ${2\over 3}(M_3(2)-M_3(1))$, so it is also correct.

Coefficient at $p_2$ equals to $${1\over 2(2g+1)(2g+2)}+{g\over 2g+1}-{g\over 4(g+1)}+{M_2(1)\over 4}+{1\over 4}-{M_2(0)\over 4}+{(-1)^{1-g}\over 4}={1-M_2(0)+M_2(1)\over 2},$$
that is 0 for even $g$ and 1 for odd.

Coefficient at $p_1p_2$ equals to $${1\over 2(2g+1)}+{g\over 2g+1}+{M_2(1)\over 2}+1/2-M_2(0)+{M_2(1)-M_2(0)\over 2}=1-M_2(0)+M_2(1),$$
that is 0 for even $g$ and 2 for odd.

Coefficient at $p_1^2p_2$ equals to $${2g^2+3g+1\over 2(2g+1)(g+1)}+{3M_2(1)\over 4}+{1\over 4}-{M_2(0)\over 4}+{(-1)^{1-g}\over 4}={1\over 2}-{M_2(0)\over 2}+M_2(1),$$
that is 0 for even $g$ and ${3\over 2}$ for odd ones.

Coefficient at $p_2^2$ equals to $$-{1\over 8(g+1)}-{g\over 8(g+1)}-{g\over 8}M_2(1)+{g\over 8}M_2(0)+{(g-1)(-1)^{1-g}\over 8}=-{M_2(1)\over 4},$$
what is true.

Coefficient at $p_4$ equals to $${M_4(3)\over 4}-{M_4(1)\over 4}+{(-1)^g\over 4}+{M_2(0)(-1)^{1-{g\over 2}}\over 4},$$
that is 0 for $g=0 (\mod 4)$, is $-{1\over 2}$ for $g=1 (\mod 4)$, is ${1\over 2}$ for $g=2 (\mod 4)$ and is again 0 for
$g=3 (\mod 4)$.

So we finally can conclude that up to 4 points for any genus the conjectural answer coincides with known before in (\cite{bergstrom}).

\bigskip

\centerline{\large \bf Appendix B: comparison with the results of G. Bini}

\bigskip

G. Bini proved in \cite{bini2} that for $5\le n\le 2g+2$ the following identity holds:

$$\chi(\mathcal{H}_{g,n})=-{(-2)^n\cdot n!\over 2(2g+2)!}((2g-1)!{2g-1+n\choose n}-{(2g)!\over 4}{2g+n-2\choose n-2}+{(2g+1)!\over 32}{2g+n-3\choose n-4}$$
$$+\sum_{r=3}^{[n/2]}{(-1)^r(2g-1)!\over 2^{2r}}{2g-1+r\choose r}{2g-1+n-r\choose n-2r})$$
$$+{(-2)^{n}n!\over 4(2g+1)!}((2g-1)!{2g+n-2\choose n-1}-{(2g)!\over 4}{2g+n-3\choose n-3}$$
$$+\sum_{r=2}^{[(n-1)/2]}{(-1)^r(2g-1)!\over 2^{2r}}{2g-1+r\choose r}{2g-2+n-r\choose n-1-r})$$
$$-{(-2)^{n}n!\over 16(2g)!}(2g-1)!{2g-3+n\choose n-2}-(2g-1)(2g-2)\ldots(2g-n+3)$$
$$-{(-2)^{n}n!\over 16(2g)!}(\sum_{r=1}^{[(n-2)/2]}{(-1)^{r}(2g-1)!\over 2^{2r}}{2g-1+r\choose r}{2g-3+n-r\choose n-2-2r})$$
$$-{(-2)^{n}n!\over 2}\sum_{j=3}^{n-1}{(-1)^{j}(j-3)!\over 2^j j!}\sum_{r=0}^{[(n-j)/2]}{(-1)^r\over 2^{2r}}{j+r-3\choose r}{2g-1+r\choose 2g+2-j}{2g-1+n-j-r\choose n-j-2r}.$$

To check that this answer coincides with the expected one, we first make this formula more compact.

Remark that:
$$(2g-1)!{2g-1+n\choose n}={(2g+n-1)!\over n!},$$
$$(2g)!{2g+n-2\choose n-2}={(2g+n-2)!\over (n-2)!},$$
$$(2g+1)!{2g+n-3\choose n-4}={(2g+n-3)!\over (n-4)!},$$
$$(2g-1)!{2g-1+r\choose r}{2g-1+n-r\choose n-2r}=
{(2g-1+n-r)!\over r!(n-2r)!}.$$
Remark that three first summands in the first bracket have the same form for $r=0,1$ and 2 respectively.
Thus the first bracket can be written in a form
\begin{equation}
-{(-2)^n\cdot n!\over 2(2g+2)!}\sum_{r=0}^{[n/2]}{(-1)^r\over 2^{2r}}{(2g-1+n-r)!\over r!(n-2r)!}.
\end{equation}
Analogously the second sum can be rewritten in a form
\begin{equation}
{(-2)^{n}n!\over 4(2g+1)!}\sum_{r=0}^{[(n-1)/2]}{(-1)^r\over 2^{2r}}{(2g-2+n-3)!\over r!(n-1-2r)!}.
\end{equation}

Further,
$$(2g-1)!{2g-3+n\choose n-2}={2g-3+n\over (n-2)!},$$
$$(2g-1)!{2g-1+r\choose r}{2g-3+n-r\choose n-2-2r}={(2g-3+n-r)!\over r!(n-2-2r)!},$$
and, finally,
$${(j-3)!\over j!}{j+r-3\choose r}{2g-1+r\choose 2g+2-j}{2g-1+n-j-r\choose n-j-2r}={(2g-1+n-j-r)!\over j!r!(2g+2-j)!(n-j-2r)!}.$$

Remark that above we have summands of the same form for $j=0,1$ and 2 respectively.
Therefore Bini's theorem can be reformulated in the following way:
$$\chi(\mathcal{H}_{g,n})=-{(-2)^n n!\over 2}\sum_{j=0}^{n-1}\sum_{r=0}^{[(n-j)/2]}{(-1)^{j+r}\over 2^{j+2r}}{(2g-1+n-j-r)!\over j!r!(2g+2-j)!(n-j-2r)!}-$$
$$-(2g-1)(2g-2)\ldots(2g-n+3).$$

Let us calculate also a summand corresponding to $j=n$, $r=0$. We get

$$-{n!\over 2}{(2g-1)!\over n!(2g+2-n)!}=-{1\over 2}(2g-1)(2g-2)\ldots(2g-n+3),$$
so we have
\begin{equation}
\label{chi}
\chi(\mathcal{H}_{g,n})=-{(-2)^n n!\over 2}\sum_{0
\le j+2r\le n}{(-1)^{j+r}\over 2^{j+2r}}{(2g-1+n-j-r)!\over j!r!(2g+2-j)!(n-j-2r)!}-
\end{equation}
$$-{1\over 2}(2g-1)(2g-2)\ldots(2g-n+3).$$

\begin{lemma}
$$-{(-2)^n n!\over 2}\sum_{0
\le j+2r\le n}{(-1)^{j+r}\over 2^{j+2r}}{(2g-1+n-j-r)!\over j!r!(2g+2-j)!(n-j-2r)!}=$$
$$(-1)^{n+1}{(2g+n-3)!\over 2\cdot 2g(2g+1)(2g+2)(2g-3)!}.$$
\end{lemma}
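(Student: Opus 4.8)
The plan is to prove the identity by recognizing the double sum as a single coefficient extraction from a product of two explicit generating functions, which then collapses dramatically. Write $m=2g$ for brevity and put $k=n-j-2r$, so that the sum runs over all triples $(j,r,k)$ of nonnegative integers with $j+2r+k=n$. The crucial first observation is that, after rewriting $m-1+n-j-r=m-1+r+k$ and $n-j-2r=k$, the summand factors as a product of a part depending only on $j$ and a part depending only on $(r,k)$:
$$\frac{(-1)^{j}}{2^{j}\,j!\,(m+2-j)!}\cdot\frac{(-1)^{r}}{2^{2r}}\frac{(m-1+r+k)!}{r!\,k!}.$$
Introducing a formal variable $x$ that records the total degree (so that $j\mapsto x^{j}$, $r\mapsto x^{2r}$, $k\mapsto x^{k}$), the sum $S$ becomes $[x^{n}]\,A(x)B(x)$, where $A(x)=\sum_{j}\frac{(-1)^{j}}{2^{j}j!(m+2-j)!}\,x^{j}$ and $B(x)=\sum_{r,k}\frac{(-1)^{r}}{2^{2r}}\frac{(m-1+r+k)!}{r!\,k!}\,x^{2r+k}$.

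Next I would evaluate the two series in closed form. For $A(x)$ the binomial theorem gives immediately $A(x)=\frac{1}{(m+2)!}(1-x/2)^{m+2}$, the truncation being automatic since the terms with $j>m+2$ vanish. For $B(x)$ I would first sum over $k$ via $\sum_{k\ge0}\frac{(N+k)!}{k!}x^{k}=N!\,(1-x)^{-N-1}$ with $N=m-1+r$, and then sum over $r$ via the negative binomial series $\sum_{r\ge0}\binom{m-1+r}{r}y^{r}=(1-y)^{-m}$ with $y=-x^{2}/\big(4(1-x)\big)$. The point at which everything becomes clean is the algebraic identity $1-x+x^{2}/4=(1-x/2)^{2}$: after it the factor $(1-x)^{-m}$ cancels and one is left with $B(x)=(m-1)!\,(1-x/2)^{-2m}$.

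The rest is bookkeeping. Multiplying gives $A(x)B(x)=\frac{(m-1)!}{(m+2)!}(1-x/2)^{2-m}$, so $S=\frac{(m-1)!}{(m+2)!}\binom{2-m}{n}(-1/2)^{n}$; applying $\binom{2-m}{n}=(-1)^{n}\binom{m+n-3}{n}$ together with $\frac{(m-1)!}{(m+2)!}=\frac{1}{m(m+1)(m+2)}$ yields $S=\frac{(m+n-3)!}{2^{n}n!\,m(m+1)(m+2)(m-3)!}$. Finally, multiplying by the prefactor $-\frac{(-2)^{n}n!}{2}$ and using $(-2)^{n}/2^{n}=(-1)^{n}$ produces $(-1)^{n+1}\frac{(m+n-3)!}{2\,m(m+1)(m+2)(m-3)!}$, which is exactly the right-hand side after substituting $m=2g$.

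The main obstacle, and the only genuinely nonroutine step, is the evaluation of $B(x)$: one must see that the double sum telescopes and, in particular, spot the perfect square $1-x+x^{2}/4=(1-x/2)^{2}$ that cancels the $(1-x)^{-m}$ factor and leaves a single power of $(1-x/2)$. Everything else is formal manipulation of power series in $\mathbb{Q}[[x]]$ (legitimate for each fixed integer $m\ge3$, i.e. $g\ge2$) together with the two standard binomial identities, and the factorization of the summand, which is what makes the generating-function approach available at all.
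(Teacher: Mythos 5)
Your proof is correct and takes essentially the same route as the paper's: the paper likewise turns the sum into a generating function (in a variable $t$ recording $n$), factors out the $j$-sum as $(1+t)^{2g+2}/(2g+2)!$, applies the negative binomial series, and exploits the same perfect-square cancellation ($1+2t+t^2=(1+t)^2$, which is your $1-x+x^2/4=(1-x/2)^2$ under $x=-2t$) to collapse everything to $(1+t)^{2-2g}/\bigl(2g(2g+1)(2g+2)\bigr)$. Your packaging as a coefficient extraction $[x^n]A(x)B(x)$, with the clean two-factor split avoiding the paper's intermediate negative powers of $t$, is only a cosmetic difference.
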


\begin{corollary}
The Bini's answer coincides with the one obtained in Theorem 1.
\end{corollary}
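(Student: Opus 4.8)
The plan is to treat the triple sum on the left as a coefficient extraction from a product of two independent generating functions, after which an unexpected cancellation collapses everything to a single binomial coefficient. Write $a=2g$ for brevity. First I would clear the prefactor: since $-\tfrac{(-2)^n n!}{2}=\tfrac{(-1)^{n+1}}{2}\,2^n n!$, it suffices to prove that $T_n:=2^n n!\,S$, where $S$ denotes the displayed sum, equals $\dfrac{(a+n-3)!}{a(a+1)(a+2)(a-3)!}$; the claimed right-hand side then follows at once with $a=2g$.

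To compute $T_n$ I would introduce the slack variable $s=n-j-2r\ge 0$, so that the summation runs over all $j,r,s\ge 0$ with $j+2r+s=n$. The crucial simplification is that $2g-1+n-j-r=(a-1)+r+s$ once $n=j+2r+s$ is substituted, and $2^n/2^{j+2r}=2^s$. Hence
\[
\frac{T_n}{n!}=\sum_{j+2r+s=n}(-1)^{j+r}2^{s}\,\frac{(a-1+r+s)!}{j!\,r!\,s!\,(a+2-j)!},
\]
and I would read this off as the coefficient of $t^n$ in the product of $\sum_j \frac{(-t)^j}{j!\,(a+2-j)!}$ and $\sum_{r,s}(-1)^r 2^s \frac{(a-1+r+s)!}{r!\,s!}t^{2r+s}$, the two blocks decoupling precisely because the exponent $j+2r+s$ splits additively.

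The first block is the binomial expansion $\frac{(1-t)^{a+2}}{(a+2)!}$, with the convention $1/m!=0$ for $m<0$ so that only $0\le j\le a+2$ contribute. For the second block I would use the trinomial identity $\sum_{r,s\ge 0}\frac{(b+r+s)!}{b!\,r!\,s!}x^r y^s=(1-x-y)^{-(b+1)}$, which follows by first summing over $r+s=m$ and then applying $\sum_m\binom{b+m}{m}u^m=(1-u)^{-(b+1)}$. With $b=a-1$, $x=-t^2$, $y=2t$ this gives $\frac{(a-1)!}{(1-2t+t^2)^{a}}=\frac{(a-1)!}{(1-t)^{2a}}$. Multiplying the two blocks, the powers of $(1-t)$ collapse: $\frac{(1-t)^{a+2}}{(a+2)!}\cdot\frac{(a-1)!}{(1-t)^{2a}}=\frac{(a-1)!}{(a+2)!}(1-t)^{2-a}$. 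Extracting $[t^n]$ via $[t^n](1-t)^{2-a}=\binom{a+n-3}{n}=\frac{(a+n-3)!}{n!\,(a-3)!}$ and using $\frac{(a-1)!}{(a+2)!}=\frac{1}{a(a+1)(a+2)}$ yields $T_n=\frac{(a+n-3)!}{a(a+1)(a+2)(a-3)!}$, as required.

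The main obstacle is recognizing the right reorganization: the substitution $s=n-j-2r$ that turns the genus-dependent factorial $(2g-1+n-j-r)!$ into the clean $(a-1+r+s)!$, so that the summand factors into a $j$-block times an $(r,s)$-block with additively separating exponent. Once that is seen, the only nontrivial input is the trinomial generating-function identity, and the final cancellation of $(1-t)$-powers is automatic. I would also note that in the relevant range $5\le n\le 2g+2$ one has $j\le n-1\le a+1<a+2$, so no genuinely negative factorials occur and the formal manipulations are fully justified.
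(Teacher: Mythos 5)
Your proof is correct and follows essentially the same route as the paper: both arguments establish the key lemma by passing to the generating function in $t$, factoring the triple sum into a $j$-block (binomial theorem, giving a power of $(1\mp t)^{2g+2}$) and a remaining block that sums via the negative binomial series to $(1\mp t)^{-4g}$, and then extracting the coefficient of $t^n$ from $(1\mp t)^{2-2g}$. The only differences are cosmetic: you introduce the slack variable $s=n-j-2r$ and invoke the trinomial identity in one stroke (and work with $(1-t)$ after dividing out $(-1)^n$), whereas the paper substitutes $a=n-j-r$, $b=n-j-2r$ and performs the two nested binomial summations explicitly with powers of $(1+t)$.
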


The corollary is clear since the term outside the sum corresponds to the generating function
$$-{1\over 2\cdot 2g(2g+1)(2g+2)}(1+t)^{2+2g}.$$

Now let us prove the lemma. Its statement can be reformulated as
$$\sum_{0\le j+2r\le n}(-1)^{n-j-r}2^{n-j-2r}{(2g-1+n-j-r)!\over j!r!(2g+2-j)!(n-j-2r)!}=(-1)^{n}{(2g+n-3)!\over 2g(2g+1)(2g+2)n!(2g-3)!}.$$
Let us compare the generating functions of the expressions by $n$ in both parts.
Let $a=n-j-r, b=n-j-2r$. Then $r=a-b$, $n=2a+j-b$.
We have
$$\sum_{n=0}^{\infty}t^{n}\sum_{0\le j+2r\le n}(-1)^{n-j-r}2^{n-j-2r}{(2g-1+n-j-r)!\over j!r!(2g+2-j)!(n-j-2r)!}=$$
$$=\sum_{j=0}^{\infty}\sum_{a=0}^{\infty}\sum_{b=0}^{a}t^{2a+j-b}(-1)^{a}2^{b}{(2g-1+a)!\over j!(a-b)!(2g+2-j)!b!}=\sum_{j=0}^{2g+2}{1\over (2g+2)!}t^{j}{2g+2\choose j}\times$$
$$\sum_{a=0}^{\infty}t^{2a}(-1)^{a}(2g-1+a)!\sum_{b=0}^{a}{1\over a!}t^{-b}2^{b}{a\choose b}=$$
$${(1+t)^{2g+2}\over (2g+2)!}\sum_{a=0}^{\infty}t^{2a}(-1)^{a}(1+{2\over t})^{a}{(2g-1+a)!\over a!}=$$
$${(1+t)^{2g+2}\over (2g+2)!}\sum_{a=0}^{\infty}(-2t-t^2)^{a}{(2g-1+a)!\over a!}=$$
$${(1+t)^{2g+2}\over (2g+2)!}(2g-1)!(1-(-2t-t^2))^{-2g}={(1+t)^{2g+2}\over 2g(2g+1)(2g+2)}(1+t)^{-4g}=$$
$${(1+t)^{2-2g}\over 2g(2g+1)(2g+2)}=\sum_{n=0}^{\infty}t^{n}(-1)^{n}{(2g+n-3)!\over 2g(2g+1)(2g+2)n!(2g-3)!}.$$

This completes the proof. Everywhere in this calculation factorials of negative integers are supposed to be zero.

\newpage

Moscow State University,\newline
Department of Mathematics and Mechanics.\newline
E. mail: gorsky@mccme.ru.

\end{document}